\documentclass[reqno,12pt]{amsart}
\usepackage[T1]{fontenc}
\usepackage{enumerate}
%the next three should add to 6.5in
\textwidth=6.2in
\oddsidemargin=0.15in 	%add 1in for left margin
\evensidemargin=0.15in 	%add 1in for right margin
\topmargin=-.3in
\textheight=8.5in

%%%%%%%%%%%%%%%%%%%%%%%%%%%%%% Textclass specific LaTeX commands.
\theoremstyle{plain}
\newtheorem{thm}{Theorem}
%%\numberwithin{equation}{section} %% Comment out for sequentially-numbered

\newtheorem{corollary}{Corollary}

\begin{document}
\title{On a second-order rational difference equation and a rational system}

\author[Gabriel Lugo]{Gabriel Lugo}
\address{Department of Mathematics, University of Rhode Island,Kingston, RI 02881-0816, USA;}
\email{glugo@math.uri.edu}
\author[Frank J. Palladino]{Frank J. Palladino}
\address{Department of Mathematics, University of Rhode Island,Kingston, RI 02881-0816, USA;}
\email{frank@math.uri.edu}
\date{January 25, 2012}
\subjclass{39A11,39A22}
\keywords{difference equation, global asymptotic stability, boundedness character, difference inequality, rational system}

\begin{abstract}
\noindent We give a complete description of the qualitative behavior of the second-order rational difference equation \#166. 
We also establish the boundedness character for the rational system in the plane \#(8,30).
\end{abstract}
\maketitle

\section{Introduction}
In their book \cite{klbook}, Kulenovi\'c and Ladas initiated a systematic study of the general second-order rational difference equation,
\begin{equation}\label{b}
x_{n+1}=\frac{\alpha +\beta x_{n} + \gamma x_{n-1}}{A+ B x_{n} + C x_{n-1}},\quad n=0,1,\dots ,
\end{equation}
with nonnegative parameters so that $A+B+C >0$ and nonnegative inital conditions chosen to avoid division by zero. A main feature of this study was the subdivision of the problem into a large number of special cases.
The study of these special cases has attracted a great deal of attention in the literature. A large amount of work has been directed toward developing a complete picture of the qualitative behavior of the difference equation \eqref{b}.
A detailed account of the progress up to 2007 can be found in \cite{acl2p1} and \cite{acl2p2}. A more recent account of the subsequent progress up to 2009 can be found in \cite{basumerino}.
According to Ref. \cite{basumerino}, there remain only two special cases of \eqref{b} for which the qualitative behavior has not been established yet. 
However, the authors of Ref. \cite{basumerino} claim that a change of variables found in Ref. \cite{klr} reduces the special case \#141 to the special case \#66 which was resolved in \cite{merino}.
After carefully reading \cite{klr} we could not find the purported change of variables. So, as far as we know, there remain three special cases of \eqref{b} for which the qualitative behavior has not been established yet.
These three remaining special cases are the cases numbered 68, 141 and 166 in the numbering system given in \cite{clbook}. The special cases \#68 and \#141 are the two subcases of the following second-order rational difference equation 
$$x_{n+1}=\frac{\alpha + x_{n}}{A+ x_{n}+ Cx_{n-1}},\quad n=0,1,\dots ,$$
with $A\geq 0$, all other parameters positive, and nonnegative initial conditions.
The special case \#166 is the second-order rational difference equation 
\begin{equation}\label{a}
x_{n+1}=\frac{\alpha+\beta x_{n}+x_{n-1}}{A+x_{n-1}},\quad n=0,1,\dots ,
\end{equation}
with positive parameters and nonnegative initial conditions.
In this article, we prove that in all ranges of positive parameters the unique equilibrium of the difference equation \eqref{a} is globally asymptotically stable. Thus, the special cases \#68 and \#141 are the only remaining cases of \eqref{b} for which the qualitative behavior has not been established yet.\par
More recently, in \cite{cklm}, Camouzis, Kulenovi\'c, Ladas, and Merino have initiated a systematic study of the general rational system of difference equations in the plane,
\begin{equation}\label{system}
x_{n+1}=\frac{\alpha+\beta x_{n} + \gamma y_{n}}{A+B x_{n} + C y_{n}},\quad y_{n+1}=\frac{p+\delta x_{n} + \epsilon y_{n}}{q+D x_{n} + E y_{n}},\quad n=0,1,2,\dots ,
\end{equation}
with nonnegative parameters and nonnegative inital conditions chosen to avoid division by zero. According to Ladas, there remain only two special cases of \eqref{system} for which the boundedness character has not been established yet. 
These two remaining special cases are the cases numbered $(6,25)$ and $(8,30)$ in the numbering system given in \cite{cklm}. The special case \#$(6,25)$ is the system 
$$x_{n+1}=\frac{x_{n}}{y_{n}},\quad y_{n+1}= x_{n} + \epsilon y_{n},\quad n=0,1,2,\dots ,$$
with $\epsilon > 0$ and nonnegative initial conditions.
The special case \#$(8,30)$ is the system 
\begin{equation}\label{8_30}
x_{n+1}=\frac{y_{n}}{x_{n}}, \quad y_{n+1}=\frac{\alpha + \gamma y_{n}}{x_{n}+y_{n}}\quad n=0,1,\dots ,
\end{equation}
with positive parameters and nonnegative initial conditions.
In this article, we prove that in all ranges of positive parameters every solution of the difference equation \eqref{8_30} is bounded. Thus, the special case \#$(6,25)$ is the only remaining case of \eqref{system} for which the boundedness character has not been established yet.\par

\section{Equation \#166}

\begin{thm}
The unique equilibrium of the difference equation \eqref{a} is globally asymptotically stable. 
\end{thm}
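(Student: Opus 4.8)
The plan is to combine three ingredients: boundedness and persistence of every solution, local asymptotic stability of the unique positive equilibrium $\bar x$ (the positive root of $q(t):=t^{2}+(A-\beta-1)t-\alpha=0$), and global attractivity of $\bar x$; together the first and third say $\bar x$ attracts everything, and with the second this is global asymptotic stability. Throughout write $f(u,v)=\dfrac{\alpha+\beta u+v}{A+v}$, so that $x_{n+1}=f(x_{n},x_{n-1})$ and $f$ is strictly increasing in its first argument. For the \emph{local} part one linearizes: $f_{u}(\bar x,\bar x)=\dfrac{\beta}{A+\bar x}$ and $f_{v}(\bar x,\bar x)=\dfrac{A-\alpha-\beta\bar x}{(A+\bar x)^{2}}$, and using $\alpha=\bar x^{2}+(A-\beta-1)\bar x$ one gets the identity $A-\alpha-\beta\bar x=(A+\bar x)(1-\bar x)$, hence $f_{v}(\bar x,\bar x)=\dfrac{1-\bar x}{A+\bar x}$. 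The Schur--Cohn conditions $|f_{v}|<1$ and $|f_{u}|<1-f_{v}$ then reduce to $|1-\bar x|<A+\bar x$ and $\beta<A+2\bar x-1$, and both follow once one knows $\bar x+A-\beta-1>0$, which is immediate from $\bar x(\bar x+A-\beta-1)=\alpha>0$. So $\bar x$ is locally asymptotically stable for every choice of positive parameters.

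For \emph{boundedness}, note first that $f(u,v)\ge\dfrac{\alpha+v}{A+v}\ge\min\{1,\alpha/A\}=:L_{0}$ for all $u,v\ge0$, so $x_{n}\ge L_{0}>0$ for $n\ge1$ and the solution persists. To bound it above, suppose $\limsup x_{n}=\infty$ and let $n_{i}$ run through the successive record indices, so $x_{n_{i}}\to\infty$ and $x_{n_{i}}>x_{j}$ for all $j<n_{i}$, with $n_{i}\ge3$. Substituting $x_{n_{i}-1}<x_{n_{i}}$ into the recursion written as $x_{n_{i}}(A+x_{n_{i}-2})=\alpha+\beta x_{n_{i}-1}+x_{n_{i}-2}$ gives $x_{n_{i}-2}(x_{n_{i}}-1)<\alpha+x_{n_{i}}(\beta-A)$; if $\beta\le A$ this is impossible for large $i$, while if $\beta>A$ it forces $(x_{n_{i}-2})$ to stay bounded, say below $B$. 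Then the recursion for $x_{n_{i}-1}$ has the bounded right-hand side $\le\frac{\alpha+\beta B+x_{n_{i}-3}}{A+x_{n_{i}-3}}\le\frac{\alpha+\beta B}{A}+1$, so $(x_{n_{i}-1})$ is bounded; but solving the same recursion gives $\beta x_{n_{i}-1}=x_{n_{i}}(A+x_{n_{i}-2})-\alpha-x_{n_{i}-2}\ge Ax_{n_{i}}-\alpha-B\to\infty$, a contradiction. Hence every solution is bounded.

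For \emph{global attractivity}, put $m=\liminf x_{n}$, $M=\limsup x_{n}$, both finite and positive; it suffices to prove $M=m$, since then $x_{n}\to\bar x$ by continuity. Since $f$ is increasing in its first variable and $v\mapsto f(M,v)$, $v\mapsto f(m,v)$ are monotone, a routine limit argument gives $M\le\max\{f(M,m),f(M,M)\}$ and $m\ge\min\{f(m,m),f(m,M)\}$. From $f(t,t)-t=-q(t)/(A+t)$ we have $f(M,M)\ge M\iff M\le\bar x$ and $f(m,m)\ge m\iff m\ge\bar x$. I claim $M\le\bar x$ already forces $M=m$: if additionally $m\ge f(m,m)$ then $m=\bar x=M$; otherwise $m\ge f(m,M)$, which rearranges to $m(A+M-\beta)\ge\alpha+M>0$, so $A+M-\beta>0$ and $m\ge\frac{\alpha+M}{A+M-\beta}\ge M$, the last inequality being equivalent to $q(M)\le0$. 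Symmetrically, $m\ge\bar x$ forces $M=m$ (here one uses $A+m-\beta>0$, which holds because $m\ge\bar x>\beta-A+1$). Finally, if neither $M\le\bar x$ nor $m\ge\bar x$, then $M\le f(M,m)$ and $m\ge f(m,M)$; subtracting the two inequalities after clearing denominators yields $(M-m)(A-\beta+1)\le0$, so $M=m$ whenever $\beta<A+1$.

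The one case left uncovered is $\beta\ge A+1$; by $\bar x+A-\beta-1>0$ this forces $\bar x>2$, hence $M>\bar x>1$ and $m<\bar x$, and here the $\limsup$/$\liminf$ inequalities alone no longer separate $M$ from $m$ --- this is the expected main obstacle. The plan to close it is to pass to a full limiting solution $(\ell_{k})$ of the recursion, indexed by all integers, with values in $[m,M]$ and $\ell_{0}=M=\max_{k}\ell_{k}$ (produced from a subsequence converging to $M$ by the usual boundedness/diagonal extraction), and to exploit the exact relation
$$x_{n+1}-\bar x=\frac{\beta(x_{n}-\bar x)-(\bar x-1)(x_{n-1}-\bar x)}{A+x_{n-1}},$$
so that $\operatorname{sign}(x_{n+1}-\bar x)=\operatorname{sign}\!\bigl(\beta(x_{n}-\bar x)-(\bar x-1)(x_{n-1}-\bar x)\bigr)$ with $\bar x-1>1$. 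Using $\ell_{1}\le M=\ell_{0}$ one extracts $\ell_{-1}\ge\frac{\alpha+M(\beta-A)}{M-1}$, and then from $\ell_{0}=f(\ell_{-1},\ell_{-2})$ together with $\ell_{-1}\le M$ one gets $\ell_{-2}\le\frac{\alpha+M(\beta-A)}{M-1}<\bar x$; thus the two neighbours of the maximum already straddle $\bar x$. Propagating these one-sided bounds down $(\ell_{k})$, together with the analogous chain for a limiting solution attaining $m$, should contradict $m<\bar x<M$. I expect this propagation in the range $\beta\ge A+1$ to be the technically heaviest step; everything else above is routine.
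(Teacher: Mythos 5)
Most of your proposal is sound and self-contained as far as it goes: the persistence bound $x_n\ge\min\{1,\alpha/A\}$, the record-index argument for boundedness, the linearization (modulo the typo: it should read $f(m,m)\le m\iff m\ge\bar x$), and the $\limsup$/$\liminf$ analysis are all correct. Indeed your argument already gives a complete, citation-free proof of global attractivity whenever $\beta<A+1$, a regime that strictly contains the paper's case $A>\alpha+\beta$ (which the paper handles via the invariant interval $[0,(A-\alpha)/\beta]$ and the m-M theorem), so there your route is arguably more elementary than the paper's.

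However, the case $\beta\ge A+1$ with $m<\bar x<M$ is a genuine gap, not a loose end. You only sketch a plan (``propagating these one-sided bounds \dots should contradict $m<\bar x<M$''), and the sketch does not close: from $\ell_0=M$ you correctly obtain $\ell_{-2}\le\frac{\alpha+M(\beta-A)}{M-1}<\bar x$, but the lower bound you extract for $\ell_{-1}$ is that same quantity, which is itself below $\bar x$, so the two neighbours of the maximum are not shown to straddle $\bar x$; and in the identity $\operatorname{sign}(\ell_{k+1}-\bar x)=\operatorname{sign}\bigl(\beta(\ell_k-\bar x)-(\bar x-1)(\ell_{k-1}-\bar x)\bigr)$ the coefficient $\bar x-1>1$ of the ``wrong-sign'' term is large, so the one-sided bounds do not propagate in any obvious way. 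This is exactly the regime where $f_v(\bar x,\bar x)=(1-\bar x)/(A+\bar x)<0$ while $f$ is not globally monotone in its second argument, which is why \#166 remained open. The paper does not prove this part from scratch either: for $A\le\alpha+\beta$ it shows $[1,\infty)$ is invariant and attracting, changes variables via $x_n=1+z_n$ to reduce to equation \#66, and invokes Merino's computer-algebra-assisted global attractivity theorem for that equation. Your leftover case $\beta\ge A+1$ lies entirely inside that Merino-dependent regime, so to finish you must either carry out the propagation argument in full (nontrivial) or perform the same reduction and citation that the paper does.
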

\begin{proof}
The proof will proceed in three cases. The first case will be the case where $A\leq \alpha$, which was proved in \cite{klbook}. For the reader's convenience, we will restate the proof here. 
Notice that for $A\leq \alpha$,
$$x_{n+1}=\frac{\alpha+\beta x_{n}+x_{n-1}}{A+x_{n-1}}=\frac{A-A+\alpha+\beta x_{n}+x_{n-1}}{A+x_{n-1}}=1+\frac{\alpha-A+\beta x_{n}}{A+x_{n-1}}.$$
Via the change of variables $x_n=1+z_n$, this difference equation reduces to the following equation.
$$z_{n+1}=\frac{\alpha-A+\beta+\beta z_{n}}{A+1+z_{n-1}}.$$
It was shown in \cite{merino} that the unique equilibrium is globally asymptotically stable for the above equation, thus the unique equilibrium is globally asymptotically stable for the difference equation \eqref{a} in this case.
Now we will address the case where $A>\alpha$ and $A\leq \beta + \alpha$. We will begin by showing that the interval $[1,\infty )$ is an invariant attracting interval.
First notice that 
\begin{equation}\label{partial}
\frac{\partial }{\partial x}\left(\frac{\alpha+\beta y+x}{A+x}\right)=\frac{A-\alpha-\beta y}{{\left(A+x\right)}^2}.
\end{equation}

Suppose $x_n\geq 1$, then 
$$x_{n+1}=\frac{\alpha +\beta x_n +x_{n-1}}{A+x_{n-1}}\geq \frac{\alpha +\beta +x_{n-1}}{A+x_{n-1}},$$
and, due to the fact that $A\leq \beta + \alpha$, 
$$x_{n+1}\geq \frac{\alpha +\beta +x_{n-1}}{A+x_{n-1}}\geq 1.$$
It follows by induction that whenever $x_N\geq 1$, then $x_n\geq 1$ for all $n\geq N$. Thus, $[1,\infty )$ is an invariant interval.
Let $\{x_{n}\}^{\infty}_{n=1}$ be a nonnegative solution to the difference equation \eqref{a}. Assume that $x_{n}\not\in [1,\infty )$ for all $n\in\mathbb{N}$.
Then $x_{n}< \frac{A-\alpha}{\beta}$ for all $n\in\mathbb{N}$, since if $x_{N}\geq \frac{A-\alpha}{\beta}$ for some $N\in\mathbb{N}$, then
$$x_{N+1}=\frac{\alpha +\beta x_N +x_{N-1}}{A+x_{N-1}}\geq \frac{A +x_{N-1}}{A+x_{N-1}}=1,$$
yielding a contradiction to our prior assumption.
Since $x_{n}< \frac{A-\alpha}{\beta}$ for all $n\in\mathbb{N}$ we get via \eqref{partial},
\begin{equation}\label{inequality_1}
x_{n+1}=\frac{\alpha +\beta x_n +x_{n-1}}{A+x_{n-1}}\geq \frac{\alpha +\beta x_n }{A},\quad n\in\mathbb{N}.
\end{equation}
So, under these assumptions, our solution satisfies the difference inequality \eqref{inequality_1}. Now, in the case where $A\leq \beta$ any solution which satisfies the difference inequality \eqref{inequality_1} must be an unbounded solution, contradicting the assumption $x_{n}\not\in [1,\infty )$ for all $n\in\mathbb{N}$.
On the other hand, if $A>\beta$ then, applying Theorem 3 from \cite{fpiteration} or similar results, for each $\epsilon>0$ there exists an $N_{\epsilon}$ so that $x_{n}\geq \frac{\alpha}{A-\beta}-\epsilon$ for all $n\geq N_{\epsilon}$. Since $A\leq \beta + \alpha$ in this case, $\frac{\alpha}{A-\beta}\geq 1$. So, in this case, for each $\epsilon>0$ there exists an $N_{\epsilon}$ so that $x_{n}\in [1-\epsilon, 1)$ for $n\geq N_{\epsilon}$. In other words, $x_{n}\rightarrow 1$.
Thus, in this case, an arbitrary solution either converges to $1$, or enters the invariant interval $[1,\infty )$.
Now, for a solution in $[1,\infty )$, we may make the change of variables $x_n=1+z_n$ reducing the equation as follows.
$$z_{n+1}=\frac{\alpha+1+z_{n-1}+\beta\left(1+z_{n}\right)}{A+1+z_{n-1}}-1=\frac{\alpha-A+\beta+\beta z_{n}}{A+1+z_{n-1}}.$$
Now, if $\alpha +\beta > A$, then this reduced equation was resolved in \cite{merino}, where it was shown that the unique equilibrium is globally asymptotically stable for the above equation. In the very special subcase where $\alpha +\beta = A$, the above equation may be rewritten as,
$$z_{n+1}=\frac{(A-\alpha ) z_{n}}{A+1+z_{n-1}}.$$
For the above equation, every solution converges to zero. Thus, $x_{n}\rightarrow 1$ in this very special case. 
So, we have shown that in the case where $A>\alpha$ and $A\leq \beta + \alpha$ that every nonnegative solution of the difference equation \eqref{a} converges to the unique positive equilibrium.
The final case we must consider is the case where $A> \beta +\alpha$. We will begin our consideration of this case, by proving that the interval $[0,\frac{A-\alpha}{\beta}]$ is invariant.
Suppose that $x_{n}\leq \frac{A-\alpha}{\beta}$, then 
$$x_{n+1}=\frac{\alpha +\beta x_n +x_{n-1}}{A+x_{n-1}}\leq 1 < \frac{A-\alpha}{\beta}.$$
Now we show that $[0,\frac{A-\alpha}{\beta}]$ is attracting. Assume, for the sake of contradiction, that $x_{n}\not\in [0,\frac{A-\alpha}{\beta}]$ for all $n\in\mathbb{N}$. 
Under this assumption, since $x_{n}> \frac{A-\alpha}{\beta}$, we may use \eqref{partial} to obtain the following difference inequality.
$$x_{n+1}=\frac{\alpha +\beta x_n +x_{n-1}}{A+x_{n-1}}\leq \frac{\alpha +\beta x_n}{A}\quad n\in\mathbb{N}.$$
Thus, applying Theorem 2 from \cite{fpiteration} or similar results, for each $\epsilon>0$ there exists a $N_{\epsilon}$ so that $x_{n}\leq \frac{\alpha}{A-\beta}+\epsilon$ for all $n\geq N_{\epsilon}$. Now, since $\frac{\alpha}{A-\beta}<1<\frac{A-\alpha}{\beta}$, every solution with these properties must eventually enter the interval $[0,\frac{A-\alpha}{\beta}]$, contradicting our assumption that this does not occur.
So, we have shown that in this case the interval $[0,\frac{A-\alpha}{\beta}]$ is an invariant interval which every solution must eventually enter. 
Since our difference equation is nondecreasing with respect to each argument and has a unique nonnegative equilibrium in this interval, the m-M theorem, see \cite{mm} and \cite{mmlp}, implies that the unique equilibrium $\bar{x}$ is globally asymptotically stable in this case. The unique equilibrium of equation \#166 is well known to be locally asymptotically stable in all cases, see \cite{klbook} for local stability of the cases we have not yet shown.
\end{proof}

\section{The boundedness character of the special case \#$(8,30)$}
Now we present the boundedness character of the following system numbered \#$(8,30)$ in the numbering system developed in \cite{cklm}.
$$x_{n+1}=\frac{y_{n}}{x_{n}}, \quad y_{n+1}=\frac{\alpha + \gamma y_{n}}{x_{n}+y_{n}}\quad n=0,1,\dots ,$$
with nonnegative parameters and nonnegative initial conditions.
It turns out that the $x_{n}$ component of the system \#$(8,30)$ can be reduced to the difference equation
\begin{equation}\label{8_30_1}
x_{n}=\left(\frac{1}{x_{n-1}x_{n-2}}\right)\frac{\alpha + \gamma x_{n-1}x_{n-2}}{1+x_{n-1}},\quad n\geq 2 ,
\end{equation}
through algebraic identities. This reduction proceeds as follows.
The first equation of the system \#$(8,30)$ gives us
$$y_{n}=x_{n+1}x_{n},\quad n\geq 0.$$
Substituting this in for $y_{n}$ in the second equation gives us
$$y_{n+1}=\frac{\alpha + \gamma x_{n+1}x_{n}}{x_{n}+x_{n+1}x_{n}},\quad n\geq 0.$$
Substituting this in for $y_{n}$ in the first equation gives us
$$x_{n+2}=\left(\frac{1}{x_{n+1}x_{n}}\right)\frac{\alpha + \gamma x_{n+1}x_{n}}{1+x_{n+1}},\quad n\geq 0.$$
This yields the difference equation \eqref{8_30_1}. So from now on it suffices to show that every solution of \eqref{8_30_1} is bounded.
\begin{thm}\label{thm3}
Every solution is bounded for the rational difference equation \eqref{8_30_1} with positive parameters and positive initial conditions.
\end{thm}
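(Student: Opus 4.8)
The plan is a short proof by contradiction: assume that some solution is unbounded, look at it at its record-high times, and run the equation one step backwards.

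First I would record the one identity that drives everything. Since $\alpha,\gamma>0$ and $x_0,x_1>0$, an obvious induction gives $x_n>0$ for all $n$, so \eqref{8_30_1} never degenerates; and clearing denominators in \eqref{8_30_1} yields
$$x_{n-1}x_{n-2}\bigl(x_n(1+x_{n-1})-\gamma\bigr)=\alpha ,\qquad n\ge 2 ,$$
so in particular $x_n(1+x_{n-1})-\gamma=\alpha/(x_{n-1}x_{n-2})>0$. Using the same identity one step earlier, $x_{n-1}x_{n-2}\cdot x_{n-3}(1+x_{n-2})=\alpha+\gamma x_{n-2}x_{n-3}\ge\alpha$, and dividing gives the ``backward inequality''
$$x_{n-3}\ \ge\ \frac{\alpha}{x_{n-1}x_{n-2}\,(1+x_{n-2})}\ =\ \frac{x_n(1+x_{n-1})-\gamma}{1+x_{n-2}}\, ,\qquad n\ge 3 ,$$
which is the only real ingredient.

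Next, assuming $\{x_n\}$ unbounded, I would pick indices $n_k\to\infty$ at which $x_j\le x_{n_k}$ for all $j\le n_k$, so $x_{n_k}\to\infty$. The backward inequality gives $x_{n_k-3}(1+x_{n_k-2})\ge x_{n_k}(1+x_{n_k-1})-\gamma$, and since also $x_{n_k-3}\le x_{n_k}$ we obtain $x_{n_k}(1+x_{n_k-2})\ge x_{n_k}(1+x_{n_k-1})-\gamma$, i.e.\ $x_{n_k}\bigl(x_{n_k-1}-x_{n_k-2}\bigr)\le\gamma$. On the other hand, keeping only the $\gamma x_{n-1}x_{n-2}$ term in the numerator of \eqref{8_30_1} shows $x_{n_k-1}\ge\gamma/(1+x_{n_k-2})$, while the identity above gives $x_{n_k-1}x_{n_k-2}=\alpha/(x_{n_k}(1+x_{n_k-1})-\gamma)\le\alpha/(x_{n_k}-\gamma)\to0$; combined with $x_{n_k-1}x_{n_k-2}\ge\gamma x_{n_k-2}/(1+x_{n_k-2})$ this forces $x_{n_k-2}\to 0$. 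Hence for large $k$, $x_{n_k-1}-x_{n_k-2}\ge\gamma/(1+x_{n_k-2})-x_{n_k-2}\ge\gamma/2$, so $x_{n_k}(x_{n_k-1}-x_{n_k-2})\le\gamma$ forces $x_{n_k}\le 2$, contradicting $x_{n_k}\to\infty$. Thus $\{x_n\}$ is bounded, and with it every solution of the system \eqref{8_30}.

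Once the backward inequality is written down, the argument is essentially routine; the only points that need a word of care are that $n_k\ge 3$ eventually (automatic) and the implication $x_{n_k-1}x_{n_k-2}\to 0\Rightarrow x_{n_k-2}\to 0$, which follows from $x_{n_k-1}\ge\gamma/(1+x_{n_k-2})$ — or, if one prefers, from the difference-inequality comparison lemmas already used in Section~2. I do not expect a genuine obstacle.
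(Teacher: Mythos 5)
Your proof is correct, and it follows essentially the same strategy as the paper's: argue by contradiction along a subsequence $x_{n_k}\to\infty$ with $x_{n_k}\ge x_{n_k-3}$, use the recurrence to deduce $x_{n_k-1}x_{n_k-2}\to 0$, and then play $x_{n_k}$ off against $x_{n_k-3}$ to reach a contradiction. The differences are only in the bookkeeping — the paper iterates the recurrence to write $x_n$ as $x_{n-3}$ times a factor that is eventually less than $1$ (its inequalities (7) and (8)), whereas you rearrange the same identity into a backward inequality, show $x_{n_k-2}\to 0$ while $x_{n_k-1}$ stays bounded below, and conclude $x_{n_k}\le 2$ — and every step checks out.
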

\begin{proof}
Recall our difference equation \eqref{8_30_1},
$$x_{n}=\left(\frac{1}{x_{n-1}x_{n-2}}\right)\frac{\alpha + \gamma x_{n-1}x_{n-2}}{1+x_{n-1}},\quad n\geq 2 .$$
Iterating with respect to the leftmost $x_{n-1}$ term in the denominator we get,
\begin{equation}\label{8_30_2}
 x_{n}=x_{n-3}\left(\frac{1+x_{n-2}}{\alpha + \gamma x_{n-2}x_{n-3}}\right)\frac{\alpha + \gamma x_{n-1}x_{n-2}}{1+x_{n-1}},\quad n\geq 3 .
\end{equation}
Equation \eqref{8_30_2} yields the following inequality,
\begin{equation}\label{8_30_3}
x_{n}< \left(\frac{x_{n-3}}{\alpha}+\frac{1}{\gamma}\right)\frac{\alpha + \gamma x_{n-1}x_{n-2}}{1+x_{n-1}},\quad n\geq 3.
\end{equation}
Iteration gives us the following inequality,
$$\frac{1}{\alpha+\gamma x_{n-2}x_{n-3}}=\frac{1}{\alpha + \gamma x_{n-3}\left(\frac{1}{x_{n-3}x_{n-4}}\right)\frac{\alpha + \gamma x_{n-3}x_{n-4}}{1+x_{n-3}}}$$ $$< \frac{1}{\alpha + \gamma x_{n-3}\left(\frac{1}{x_{n-3}x_{n-4}}\right)\frac{\gamma x_{n-3}x_{n-4}}{1+x_{n-3}}}
=\frac{1}{\alpha+\frac{\gamma^{2}x_{n-3}}{1+x_{n-3}}}.$$
Using the above Equation \eqref{8_30_2} yields
\begin{equation}\label{8_30_4}
 x_{n}< x_{n-3}\left(\frac{1}{\alpha+\frac{\gamma^{2}x_{n-3}}{1+x_{n-3}}}+\frac{1}{\gamma x_{n-3}}\right)\frac{\alpha + \gamma x_{n-1}x_{n-2}}{1+x_{n-1}},\quad n\geq 3 .
\end{equation}
The proof proceeds by contradiction. Suppose that there is an unbounded solution $x_{n}$. Take a subsequence $x_{n_{i}}\rightarrow\infty$ so that $x_{n_{i}}>x_{n_{i}-3}$ for all $i\in\mathbb{N}$.
Then equation \eqref{8_30_1} gives us that $$x_{n_{i}-1}x_{n_{i}-2}\rightarrow 0,$$ and so, from the inequality \eqref{8_30_3}, $$x_{n_{i}-3}\rightarrow \infty.$$ Thus, for sufficiently large $n_{i}$, it follows from the inequality \eqref{8_30_4} that
$$x_{n_{i}}<x_{n_{i}-3},$$ which contradicts our earlier assumption.
\end{proof}
\begin{corollary}
Every solution of the system \#$(8,30)$ is bounded.
\end{corollary}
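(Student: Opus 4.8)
The plan is to deduce the boundedness of an arbitrary solution $(x_n,y_n)$ of the system \eqref{8_30} from the boundedness of the associated scalar sequence, which has already been settled in Theorem \ref{thm3}. First I would record that any solution of \eqref{8_30} that is well defined for all $n\geq 0$ must be strictly positive in both components. Indeed, if $x_0>0$ and $y_0>0$, then $x_1=y_0/x_0>0$ and $y_1=(\alpha+\gamma y_0)/(x_0+y_0)>0$, since the numerator is at least $\alpha>0$; an immediate induction then gives $x_n,y_n>0$ for every $n$. Conversely, if $y_0=0$ one gets $x_1=0$ and the recursion for $x_2$ involves division by $x_1$, so such initial data do not produce a genuine solution. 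Hence it is harmless to assume $x_0,y_0>0$, and in particular the sequence produced by the first equation satisfies $x_0,x_1>0$.

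Next I would invoke the algebraic reduction carried out just before Theorem \ref{thm3}: from $x_{n+1}=y_n/x_n$ we obtain $y_n=x_nx_{n+1}$ for all $n\geq 0$, and substituting this back into the two equations of the system shows that the $x$-component of the solution satisfies the difference equation \eqref{8_30_1} for $n\geq 2$. Since $x_0,x_1>0$, the sequence $\{x_n\}_{n\geq 0}$ is a positive solution of \eqref{8_30_1}, so Theorem \ref{thm3} applies and furnishes a constant $M$ with $0<x_n\leq M$ for all $n$. The boundedness of the $y$-component is then immediate from the identity $y_n=x_nx_{n+1}\leq M^2$, so the solution $(x_n,y_n)$ is bounded, which is exactly the assertion of the corollary.

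I do not anticipate a genuine obstacle in this argument; it is essentially bookkeeping built on top of Theorem \ref{thm3}. The one point that needs care is verifying that the reduction is legitimate for \emph{every} solution — that is, that a solution of the system, being by definition defined for all $n$, never makes a denominator vanish and therefore remains strictly positive, which is precisely what allows us to identify $\{x_n\}$ with a bona fide positive solution of \eqref{8_30_1} and thus to apply the theorem.
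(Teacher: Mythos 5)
Your argument is the same as the paper's: apply the reduction to identify $\{x_n\}$ with a positive solution of \eqref{8_30_1}, invoke Theorem \ref{thm3} for its boundedness, and then bound $y_n$ via the identity $y_n=x_nx_{n+1}$. The extra remarks on positivity of well-defined solutions are a harmless elaboration of a point the paper leaves implicit; the proof is correct.
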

\begin{proof}
Let $\{(x_{n},y_{n})\}^{\infty}_{n=1}$ be a solution of the system \#$(8,30)$. From Theorem \ref{thm3} and the earlier reduction we know that $\{x_{n}\}^{\infty}_{n=1}$ is bounded.
Since $$y_{n}=x_{n+1}x_{n},\quad n\geq 0,$$ for all solutions of the system \#$(8,30)$, we get that $\{y_{n}\}^{\infty}_{n=1}$ is bounded.
\end{proof}

\section{Conclusion}
We have shown that the unique equilibrium of the difference equation \#166 is globally asymptotically stable and we have shown that every solution of the difference equation \#$(8,30)$ is bounded. We leave the reader with three crucial conjectures pertaining to four special cases.\par
The special cases \#68 and \#141 are the two subcases of the following second-order rational difference equation 
$$x_{n+1}=\frac{\alpha + x_{n}}{A+ x_{n}+ Cx_{n-1}},\quad n=0,1,\dots ,$$
with $A\geq 0$, all other parameters positive, and nonnegative initial conditions. It is conjectured in \cite{klbook} that the unique positive equilibrium is globally asymptotically stable for the difference equations \#68 and \#141. The difference equations \#68 and \#141 are now the only second-order rational difference equations for which the qualitative behavior has not been established yet. \par
The special case \#70 is the only remaining third-order rational difference equation whose boundedness character is yet to be determined. Special case \#70 is as follows,
$$x_{n+1}=\frac{\alpha + x_{n}}{Cx_{n-1}+x_{n-2}},\quad n=0,1,\dots ,$$
with positive parameters and nonnegative initial conditions. It is conjectured in \cite{clbook} that there exist unbounded solutions for some choice of nonnegative initial conditions for the difference equation \#70.\par
The special case \#$(6,25)$ is the following system of rational difference equations,
$$x_{n+1}=\frac{x_{n}}{y_{n}},\quad y_{n+1}= x_{n} + \epsilon y_{n},\quad n=0,1,2,\dots .$$
with positive parameters and nonnegative initial conditions. There is a conjecture in \cite{aclr}, originating in \cite{ltt}, which claims that for each solution of the system \#$(6,25)$ the sequence $\{x_{n}\}^{\infty}_{n=0}$ arising from the $x$ component of the solution is bounded. The system \#$(6,25)$ is now the only rational system in the plane for which the boundedness character has not been established yet. \par

\par\vspace{0.1 cm}

\end{document}